\newtheorem{thm}{Theorem}
\newtheorem{lem}[thm]{Lemma}
\theoremstyle{definition}
\newtheorem{defn}[thm]{Definition}
\newenvironment{principle}[3][]{\par\ensuremath{\mathsf{#2}_{#1}} ({#3}):\em}{\par}
\title{Nielsen-Schreier implies the finite Axiom of Choice}
\author{Philipp Kleppmann\footnote{Department of Pure Mathematics and Mathematical Statistics, Centre for Mathematical Sciences, Wilberforce Road, Cambridge, CB3 0WB, UK. Email: P.Kleppmann@dpmms.cam.ac.uk.}}
\begin{document}

\maketitle

\begin{abstract}
We present a new proof that the statement 'every sugbroup of a free group is free' implies the Axiom of Choice for finite sets.
\end{abstract}

\section{Introduction}

In 1921, Nielsen \cite{Nielsen1921NS} proved that every subgroup of a finitely generated free group is free. This result was generalised to arbitrary free groups by Schreier \cite{Schreier1927Untergruppen} in 1927, giving us the following result.

\begin{principle}{NS}{Nielsen-Schreier}
If $F$ is a free group and $K\leq F$ is a subgroup, then $K$ is a free group.
\end{principle}

Since every proof of $\mathsf{NS}$ uses the Axiom of Choice, it is natural to ask whether it is equivalent to the Axiom of Choice. The first step was made by L\"auchli \cite{Lauchli1962Auswahlaxiom}, who showed that $\mathsf{NS}$ cannot be proved in $\mathsf{ZF}$ set theory with atoms. Jech and Sochor's embedding theorem \cite{Jech2008Choice} allows this result to be transferred to standard $\mathsf{ZF}$ set theory. It was improved in 1985 by Howard \cite{Howard1985Subgroups}, who showed that $\mathsf{NS}$ implies $\mathsf{AC}_{fin}$, the Axiom of Choice for finite sets:

\begin{principle}[fin]{\mathsf{AC}}{Axiom of Choice for finite sets}
Every set of non-empty finite sets has a choice function.
\end{principle}

Another Choice principle used in this article is the Axiom of Choice for pairs:

\begin{principle}[2]{\mathsf{AC}}{Axiom of Choice for pairs}
Every set of 2-element sets has a choice function.
\end{principle}

The purpose of this paper is to provide a new and shorter proof of Howard's result.

\section{Nielsen-Schreier implies $\mathsf{AC}_{fin}$}
\label{section: Nielsen-Schreier implies ACfin}

Before beginning the proof, we must fix some notation and terminology. If $X$ is a set, let $X^-=\{x^{-1}:x\in X\}$ be a set of formal inverses of $X$. It does not matter what the elements of $X^-$ are, as long as $X^-$ is disjoint from $X$. Members of $X^\pm=X\cup X^-$ are called $X$-\emph{letters}. Finite sequences $x_1\cdots x_n$ with $x_1,...,x_n\in X^\pm$ are $X$-\emph{words}. An $X$-word $x_1\cdots x_n$ is $X$-\emph{reduced} if $x_i\not=x_{i+1}^{-1}$ for $i=1,...,n-1$. If $\alpha$ is an $X$-word, the $X$-\emph{reduction} of $\alpha$ is the $X$-reduced $X$-word obtained by performing all possible cancellations within $\alpha$. For notational simplicity, we don't distinguish between $X$-words and their $X$-reductions. Reference to $X$ is omitted if $X$ is clear from the context.

If $G$ is a group and $S\subseteq G$, then $\langle S\rangle$ is the subgroup of $G$ \emph{generated by} $S$.

\begin{defn}
Let $X$ be a set. The \emph{free group on} $X$, written $F(X)$, consists of all reduced $X$-words. The group operation is concatenation followed by reduction, and the identity is the empty word ${\bf 1}$.

A group $G$ is \emph{free} if it is isomorphic to $F(X)$ for some $X\subseteq G$. If this is the case, $X$ is a \emph{basis} for $G$.
\end{defn}

The following proofs will start with a family $Y$ of non-empty sets and construct a choice function $c:Y\rightarrow\bigcup Y$. Without loss of generality, we assume that the members of $Y$ are pairwise disjoint. We then define $X=\bigcup Y$ to be the basis of the free group $F=F(X)$. With every $y\in Y$ we associate a function  $\sigma_y:F\rightarrow\mathbb{Z}$ which counts the number of occurrences of $y$-letters in words $\alpha\in F$ as follows.
\begin{quote}
Write $\alpha=x_1^{\epsilon_1}\cdots x_n^{\epsilon_n}$ as an $X$-reduced word with $x_1,...,x_n\in X$ and $\epsilon_1,...,\epsilon_n\in\{\pm1\}$. Then define
$$\sigma_y(\alpha)=|\{i:x_i\in y\land\epsilon_i=1\}|-|\{i:x_i\in y\land\epsilon_i=-1\}|.$$
It is easily checked that, for each $y\in Y$, $\sigma_y$ is a group homomorphism from the free group $F$ to the additive group of integers. \end{quote}

Before proving theorem \ref{theorem: NS implies ACfin} we handle a special case in lemma \ref{lemma: NS implies AC2}. Its proof serves as an introduction to ideas used in the proof of the main theorem.

\begin{lem}
\label{lemma: NS implies AC2}
$\mathsf{ZF}\vdash\mathsf{NS}\Rightarrow\mathsf{AC}_2$
\end{lem}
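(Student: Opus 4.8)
The plan is to specialise the construction set up just before the lemma to a family $Y$ of $2$-element sets. After replacing the members of $Y$ by pairwise disjoint representatives, put $X=\bigcup Y$ and $F=F(X)$, and write each $y\in Y$ as $y=\{a,b\}$ (with no pretence that $a$ and $b$ can be distinguished). I would then fix the canonical subgroup
$K=\langle\{ab^{-1},ba^{-1}\}:y=\{a,b\}\in Y\rangle\leq F$.
Each generator lies in $\ker\sigma_{y'}$ for every $y'$, and because each $\sigma_{y'}$ assigns $+1$ to both positive $y'$-letters, the generating set, and hence $K$, is invariant under the involution $\theta_y$ of $F$ that interchanges $a$ and $b$ and fixes all other generators; so $K$ is defined without any choice. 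The family of cyclic subgroups $\langle ab^{-1}\rangle$ is likewise canonical, but selecting one generator $ab^{-1}$ or $ba^{-1}$ from each is exactly the choice we are not allowed to make, so $\{ab^{-1}\}_y$ is \emph{not} available as an actual basis. This is where $\mathsf{NS}$ enters: it hands us a bona fide basis $B$ of $K$, and the whole task becomes to convert the single set $B$ into a choice function $c:Y\to\bigcup Y$ all at once.

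For a fixed pair $y=\{a,b\}$ the elements $g_y=ab^{-1}$ and $g_y^{-1}=ba^{-1}$ both lie in $K$, are distinct (as $a\neq b$), and form a canonical unordered pair; choosing an element of $y$ is equivalent to orienting it, i.e.\ to singling out one of $g_y,g_y^{-1}$. To exploit $B$ I would abelianise: since distinct pairs have disjoint supports, $K$ is free on the $g_y$, and the map $K\to F^{\mathrm{ab}}=\bigoplus_{x\in X}\mathbb{Z}$ sends $g_y$ to $e_a-e_b$ and identifies $K^{\mathrm{ab}}$ with $M=\{(n_x)_x:\sum_{x\in y}n_x=0\text{ for all }y\}=\bigoplus_{y}\mathbb{Z}(e_a-e_b)$. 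Consequently the images of the elements of $B$ in $K^{\mathrm{ab}}\cong M$ form a genuine $\mathbb{Z}$-basis of $M$, and the problem reduces to the clean linear-algebra question: given an arbitrary $\mathbb{Z}$-basis of $M$, orient each sign-ambiguous generator $e_a-e_b$. The natural attempt is to read off, for each $\beta\in B$, the coefficient of $e_a$ in its image and to orient $y$ by the sign of some invariant of the resulting family of integers.

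The step I expect to be the main obstacle is making this orientation both total and single-valued. The construction must simultaneously break the independent family of swap-symmetries $\{\theta_y\}_{y\in Y}\cong\prod_{y}\mathbb{Z}/2$, and any rule definable from $(Y,B)$ in $\mathsf{ZF}$ necessarily commutes with these swaps; so it can fail exactly at a pair whose basis is left setwise invariant by $\theta_y$, where it would be forced to return the same verdict for $a$ and for $b$. The decisive sub-fact I would establish is that no basis of $K$ is $\theta_y$-invariant: on $M$, $\theta_y$ is the reflection fixing $e_{a'}-e_{b'}$ for $y'\neq y$ and negating the primitive vector $e_a-e_b$, so it has determinant $-1$, and a parity/primitivity argument (its $(-1)$-eigenline is the primitive line $\mathbb{Z}(e_a-e_b)$, which cannot arise as $b_i-b_j$ for integral basis vectors without forcing a non-integral half-sum) shows it fixes no integral basis. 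The delicate part is converting this rigidity into an explicit verdict while avoiding two pitfalls: naive global quantities such as the total signed number of occurrences of $a$ across all $\beta\in B$ are infinite sums that need not converge, so the invariant should be a genuine homomorphism or a finite primitivity datum; and whatever invariant is used must be verified never to vanish. Geometrically this is the statement that a spanning tree of the disjoint union of the ``digons'' (two vertices joined by the two edges $a,b$) is precisely a choice function, which is what $\mathsf{NS}$, read through the covering-space dictionary, supplies.
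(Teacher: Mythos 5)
Your setup (pairwise disjoint pairs, $X=\bigcup Y$, the subgroup $K$ generated by the elements $ab^{-1}$, and an $\mathsf{NS}$-basis $B$) coincides with the paper's, but from there you take a different route --- abelianising and trying to read the choice off the images of $B$ in $M=\bigoplus_y\mathbb{Z}(e_a-e_b)$ --- and as written the argument has a genuine gap: you never actually define $c(y)$. The one sub-fact you do argue, that no basis of $K$ (equivalently of $M$) is setwise $\theta_y$-invariant, is true, although your determinant argument is not meaningful when $Y$ is infinite; the clean proof is mod $2$: $\theta_y$ acts trivially on $M/2M$, so a setwise-invariant basis would be fixed pointwise, forcing every basis vector to have zero $y$-coordinate and contradicting spanning. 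But this rigidity statement is not a proof of $\mathsf{AC}_2$. Non-existence of a symmetry is exactly what one uses to \emph{refute} choice in a Fraenkel--Mostowski model; to \emph{prove} choice in $\mathsf{ZF}$ one must exhibit an explicit uniform rule, and your text defers precisely that step (``the delicate part is converting this rigidity into an explicit verdict''), correctly listing the pitfalls (divergent infinite sums, possibly vanishing invariants) without overcoming them. So the proposal establishes only that the obvious obstruction to a definable choice is absent, not that a choice function exists.

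The gap is fillable inside your framework, which is worth recording. For $y=\{a,b\}$ let $T_y\subseteq B$ be the finite, canonical support of $e_a-e_b$ in the basis, so $e_a-e_b=\sum_{\beta\in T_y}n^a_\beta\,\beta$ with all $n^a_\beta\neq0$, and write $\beta_a$ for the $a$-coordinate of the vector $\beta$ itself (an honest integer, with $\beta_b=-\beta_a$). Attach to $a$ the finite multiset $S_a=\{(n^a_\beta,\beta_a):\beta\in T_y\}\subseteq\mathbb{Z}^2$; then $S_b=-S_a$, and comparing $a$-coordinates in the expansion gives $\sum_{\beta\in T_y}n^a_\beta\beta_a=1$. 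If $S_a$ equalled $-S_a$ as a multiset, its elements would pair off as $\{v,-v\}$ with $v\neq0$, making $\sum v_1v_2$ even, contradicting that this sum is $1$; hence $S_a\neq S_b$, and one may define $c(y)$ to be the $x\in y$ whose $S_x$ is least in a fixed canonical ordering of finite multisets of $\mathbb{Z}^2$. Some such finite, parity-based invariant is what your proof still owes. The paper avoids all of this by staying non-abelian: it writes $x_0x_1^{-1}$ and $x_1x_0^{-1}$ as reduced $B$-words and either takes the middle $B$-letter (odd length) or uses the ``first half'' map $f_y$ together with the counting homomorphism $\sigma_y$ to produce a word $\alpha_y$ with $\sigma_y(\alpha_y)=1$ whose first $y$-letter is the choice --- shorter, and with no module theory; your approach, once completed, instead isolates a clean linear-algebraic fact about bases of $M$.
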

\begin{proof}
Let $Y$ be a family of 2-element sets. Without loss of generality, assume that the members of $Y$ are pairwise disjoint.

Let $X=\bigcup Y$, let $F=F(X)$ be the free group on $X$, and define the subgroup $K\leq F$ by
$$K=\langle\{wx^{-1}:(\exists y\in Y)w,x\in y\}\rangle.$$
By the Nielsen-Schreier theorem, $K$ has a basis $B$. Note that
\begin{equation}
\label{equation: K is in the kernel}
\sigma_y(\alpha)=0\text{ for all }y\in Y\text{ and all }\alpha\in K.
\end{equation}

We will construct a choice function for $Y$, i.e. a function $c:Y\rightarrow X$ satisfying $c(y)\in y$ for each $y\in Y$.

Let $y\in Y$. Define the function $s_y:y\rightarrow y$ to swap the two elements of $y$. For any choice of $x\in y$, $y=\{x,s_y(x)\}$. To simplify notation, we set $x_i=s_y^i(x)$ for all $i\in\mathbb{Z}$; hence $y=\{x_0,x_1\}$. Express $x_0x_1^{-1}$ and $x_1x_0^{-1}$ as reduced $B$-words:
\begin{eqnarray*}
x_0x_1^{-1}&=&b_{0,1}\cdots b_{0,l_0}\\
x_1x_0^{-1}&=&b_{1,1}\cdots b_{1,l_1},
\end{eqnarray*}
where $b_{i,j}\in B^\pm$ for all $i,j$. As $x_0x_1^{-1}=(x_1x_0^{-1})^{-1}$, it follows that $l_0=l_1=l$, say, and that 
\begin{equation}
\label{equation: cancellation in NS=>AC2}
b_{1,1}=b_{0,l}^{-1},...,b_{1,l}=b_{0,1}^{-1}.
\end{equation}

There are two cases:

\begin{enumerate}[(i)]
\item $l$ is odd:

Let $m=(l-1)/2$. The middle $B$-letter of $x_0x_1^{-1}$ is $b_{0,m+1}$, whereas the middle $B$-letter of $x_1x_0^{-1}$ is $b_{1,m+1}=b_{0,m+1}^{-1}$ by (\ref{equation: cancellation in NS=>AC2}). One of these two is in $B$, while the other is in $B^-$. Define $c(y)$ to be the unique element $x\in y$ such that the middle $B$-letter of $xs_y(x)^{-1}$ is a member of $B$.

\item $l$ is even:

Let $m=l/2$. The following two functions are the key to the proof.
\begin{eqnarray*}
f_y:&y\rightarrow K:&x_i\mapsto b_{i,1}\cdots b_{i,m}\\
g_y:&y\rightarrow F:&x\mapsto f_y(x)^{-1}\cdot x
\end{eqnarray*}

The idea of $f_y$ is to map $x_i$ to the 'first half' of $x_ix_{i+1}^{-1}$ in terms of the new basis $B$. $f_y(x)$ is intended to represent $x$ in $K$.

Using (\ref{equation: cancellation in NS=>AC2}), we obtain
\begin{equation}
\begin{split}
\label{equation: f is well-behaved}
f_y(x_i)f_y(x_{i+1})^{-1}&=b_{i,1}\cdots b_{i,m}b_{i+1,m}^{-1}\cdots b_{i+1,1}^{-1}\\
&=b_{i,1}\cdots b_{i,m}b_{i,m+1}\cdots b_{i,2m}\\
&=x_ix_{i+1}^{-1}.
\end{split}
\end{equation}

It follows that $g_y(x_0)=g_y(x_1)$. Hence the image of $y$ under $g_y$ has a single member, $\alpha_y$, say. Note that
\begin{equation}
\label{equation: sigma of alpha is 1}
\begin{split}
\sigma_y(\alpha_y)& =\sigma_y(g_y(x_0))\\
& =\sigma_y(f_y(x_0)^{-1}x_0)\\
& =\sigma_y(f_y(x_0)^{-1})+\sigma_y(x_0)\\
& =0+1\text{ using (\ref{equation: K is in the kernel}), }f_y(x_0)\in K\text{, and }x_0\in y
\end{split}
\end{equation}
is non-zero. This means that $\alpha_y$ mentions at least one $y$-letter. So we define $c(y)$ to be the $y$-letter which appears first in the $X$-reduction of $\alpha_y$.
\end{enumerate}

\end{proof}

We are now ready to prove the general case:

\begin{thm}
\label{theorem: NS implies ACfin}
$\mathsf{ZF}\vdash\mathsf{NS}\Rightarrow\mathsf{AC}_{fin}$.
\end{thm}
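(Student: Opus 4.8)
The plan is to keep the entire setup of Lemma~\ref{lemma: NS implies AC2} --- the group $K$, a basis $B$ obtained from a single application of $\mathsf{NS}$, and the homomorphisms $\sigma_y$, which vanish on $K$ by~(\ref{equation: K is in the kernel}) --- and to replace the parity case-split by one geometric construction valid for every finite $y\in Y$ at once. As before I would take $X=\bigcup Y$, $F=F(X)$ and $K=\langle\{wx^{-1}:(\exists y\in Y)\,w,x\in y\}\rangle$, and seek for each $y$ a function $f_y:y\to K$ satisfying $f_y(x)f_y(w)^{-1}=xw^{-1}$ for all $w,x\in y$; this is exactly the identity~(\ref{equation: f is well-behaved}) that drove the even case. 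Given such an $f_y$, the element $f_y(x)^{-1}x$ is independent of $x\in y$; calling it $\alpha_y$ and using $f_y(x)\in K$ yields $\sigma_y(\alpha_y)=\sigma_y(f_y(x)^{-1})+\sigma_y(x)=0+1\neq0$, so $\alpha_y$ mentions a $y$-letter and I define $c(y)$ to be the first $y$-letter in its reduction, just as in the lemma. Since this recipe is uniform in $y$, the single basis $B$ produces a choice function on all of $Y$ simultaneously, irrespective of the varying cardinalities $|y|$; securing that uniformity is the whole point of passing from a fixed size to $\mathsf{AC}_{fin}$.

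To construct $f_y$ I would pass to the Cayley tree $T$ of the free group $K$ with respect to $B$: its vertex set is $K$, and left multiplication by an element of $K$ is a tree automorphism. For $x\in y$ set $V_x=\{xw^{-1}:w\in y\}\subseteq K$, a finite set of vertices containing $\mathbf{1}$. Because $xu^{-1}=(xw^{-1})(wu^{-1})$, these configurations are left-translates of one another, $V_x=(xw^{-1})\,V_w$. I then assign to $V_x$ its canonical centre $C(V_x)$, namely the Jordan centre of the smallest subtree of $T$ spanning $V_x$; this depends on the unordered set $V_x$ alone and is preserved by tree automorphisms, so equivariance gives $C(V_x)=(xw^{-1})\,C(V_w)$, that is $C(V_x)C(V_w)^{-1}=xw^{-1}$. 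When $C(V_x)$ is a vertex it lies in $K$, and $f_y(x):=C(V_x)$ is precisely the required coboundary. This specialises exactly to the lemma: for $|y|=2$ the set $V_x=\{\mathbf{1},xw^{-1}\}$ spans a path of length $l$, whose Jordan centre is the midpoint vertex when $l$ is even and the midpoint edge when $l$ is odd --- the two cases treated there.

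The only remaining case is that $C(V_x)$ is an edge rather than a vertex --- the general form of the odd case. Then $C(V_x)$ is the midpoint of a canonical edge $e_x=\{v_x,v_xb_x\}$ of $T$ with $b_x\in B$, and both endpoints are equivariant, so each of the two choices $f_y(x)\in\{v_x,v_xb_x\}$ lies in $K$ and yields, by the computation above, a legitimate $\alpha_y$ with $\sigma_y(\alpha_y)\neq0$. For every such $y$ this is a canonical $2$-element set of usable candidates, and a single appeal to $\mathsf{AC}_2$ --- available since $\mathsf{NS}\Rightarrow\mathsf{AC}_2$ by Lemma~\ref{lemma: NS implies AC2} --- selects one, finishing the definition of $c$. (One could even avoid $\mathsf{AC}_2$ here, since writing $e_x=\{v_x,v_xb_x\}$ with $b_x\in B$ distinguishes the endpoint $v_x$, and left multiplication preserves this labelling.) I expect the main obstacle to be the careful verification that the centre is always a single vertex or a single edge and is a genuine function of the configuration $V_x$, so that no concealed choice enters and $y\mapsto c(y)$ is uniformly definable from $B$; it is this uniformity across all finite cardinalities, rather than any one value of $|y|$, that upgrades $\mathsf{AC}_2$ to $\mathsf{AC}_{fin}$.
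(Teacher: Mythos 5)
Your proposal is correct, but it is a genuinely different proof from the one in the paper. The paper proceeds by induction on cardinality: it closes the family under non-empty subsets, uses the choice function $c_{n-1}$ to define a successor map $s_y(x)=c_{n-1}(y\setminus\{x\})$, splits into four cases according to the orbit structure of $s_y$, and only in the hardest case (one orbit, $n$ odd) invokes the basis $B$ --- and there it needs two normalisation tricks (equalising the $B$-lengths $l_i$ and the cancellation numbers $k_i$) plus a parity argument ($nl$ even, $n$ odd, so $l$ even) to make the ``first half'' map $f_y$ well defined. You instead give a single uniform construction valid for every finite $y$ at once: the Cayley graph of $K$ with respect to the $\mathsf{NS}$-provided basis $B$ is a tree, the configurations $V_x=\{xw^{-1}:w\in y\}$ are left-translates of one another, and the Jordan centre of the finite convex hull of $V_x$ is equivariant, hence yields $f_y$ with $f_y(x)f_y(w)^{-1}=xw^{-1}$ exactly as in (\ref{equation: f is well-behaved}); the computation (\ref{equation: sigma of alpha is 1}) then finishes as before. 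The facts you flag as the ``main obstacle'' are indeed the crux, but they are standard and choice-free: the hull is a finite tree whose leaves lie in $V_x$, Jordan's theorem (provable in $\mathsf{ZF}$ by repeatedly deleting leaves) gives a centre that is a single vertex or a single edge, and every edge of the Cayley tree has a unique writing $\{g,gb\}$ with $b\in B$ (since $B\cap B^-=\emptyset$), so the tail $g$ is canonical and, as you note parenthetically, even the residual appeal to $\mathsf{AC}_2$ evaporates. What your route buys is considerable: no closure under subsets, no induction, no four-way case split, and Lemma \ref{lemma: NS implies AC2} becomes a corollary (the path of length $l$ with its midpoint vertex or midpoint edge) rather than a prerequisite; it also isolates conceptually why finiteness matters --- an infinite configuration need not have a centre --- which is the point the paper's closing paragraph gestures at. What the paper's route buys is elementarity: it never leaves word combinatorics, whereas you must develop (or cite) the tree geometry of free groups, including the fact that the Cayley graph of $K$ over $B$ is a tree.
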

\begin{proof}
Let $Z$ be a family of non-empty finite sets. Without loss of generality, assume that the members of $Z$ are pairwise disjoint. We form a new family 
$$Y=\{y:y\not=\emptyset\land(\exists z\in Z)y\subseteq z\},$$
i.e. the closure of $Z$ under taking non-empty subsets. Since $Z\subseteq Y$, any choice function for $Y$ immediately gives a choice function for $Z$.

Let $X=\bigcup Y$, let $F=F(X)$ be the free group on $X$, and let $K\leq F$ be the subgroup defined by
$$K=\langle\{wx^{-1}:(\exists y\in Y)w,x\in y\}\rangle.$$
By the Nielsen-Schreier theorem, $K$ has a basis $B$.

For each $n<\omega$, let $Y^{(n)}=\{y\in Y:|y|=n\}$ and $Y^{(\leq n)}=\{y\in Y:|y|\leq n\}$. By induction on $n$, we will find a choice function $c_n$ on $Y^{(\leq n)}$ for each $2\leq n<\omega$. By construction, the $c_n$ will be nested, so that $\bigcup_{2\leq n<\omega}c_n$ is a choice function for $Y$.

A choice function $c_2$ on $Y^{(\leq2)}$ is guaranteed by lemma \ref{lemma: NS implies AC2}.

Assume that $n\geq3$ and that there is a choice function $c_{n-1}$ for $Y^{(\leq n-1)}$. For every $y\in Y^{(n)}$ we define a function $s_y$ by
$$s_y:y\rightarrow y:x\mapsto c_{n-1}(y\setminus\{x\}).$$
Note that, as $Y$ is closed under taking non-empty subsets, $y\setminus\{x\}\in Y^{(n-1)}$, so $c_{n-1}(y\setminus\{x\})$ is defined. There are four cases:

\begin{enumerate}[(i)]
\item $s_y$ is not a bijection:

In this case, $|\{s_y(x):x\in y\}|\leq n-1$, so defining
$$c_n(y)=c_{n-1}(\{s_y(x):x\in y\})$$
gives a choice for $y$.

\item $s_y$ is a bijection with at least two orbits\footnote{Thanks to Thomas Forster for suggesting a simplification of this part of the proof}:

Since there are at least two orbits, each orbit has size $\leq n-1$. Moreover, as $s_y(x)\not=x$ for all $x\in y$, the number of orbits is also $\leq n-1$. So choosing one point from each orbit, and then choosing one point from among the chosen points gives a single element of $y$. More specifically, if we write $orb(x)$ for the orbit of $x\in y$ under $s_y$, we define
$$c_n(y)=c_{n-1}(\{c_{n-1}(orb(x)):x\in y\}).$$

\item $s_y$ is a bijection with one orbit, and $n$ is even:

If $n$ is even, $s_y^2$ is a bijection with two orbits. Remembering that we are assuming $n\geq 3$, this gives us $\leq n-1$ orbits of size $\leq n-1$ each. A choice is made as in the previous case.

\item $s_y$ is a bijection with one orbit, and $n$ is odd:

Notice that, for any $x\in y$, $y=\{x,s_y(x),s_y^2(x),...,s_y^{n-1}(x)\}$. $s_y(x)$ may be viewed as the successor of $x$. For simplicity, we set $x_i=s_y^i(x)$ for $i\in\mathbb{Z}$, so that $y=\{x_0,x_1,...,x_{n-1}\}$. 

In order to further simplify our notation, we shall assume that the elements of $Y^{(n)}$ are pairwise disjoint. Of course, this is not possible when $Y$ is constructed as above. But replacing every $y\in Y^{(n)}$ with $y\times\{y\}$ makes no difference to the argument, so the proof carries over without any changes.

Recall the basis $B$ of the subgroup $K$ defined earlier in the proof. We may write
\begin{eqnarray*}
x_0x_1^{-1}&=&b_{0,1}\cdots b_{0,l_0}\\
x_1x_2^{-1}&=&b_{1,1}\cdots b_{1,l_1}\\
&...&\\
x_{n-1}x_0^{-1}&=&b_{n-1,1}\cdots b_{n-1,l_{n-1}}
\end{eqnarray*}
as reduced $B$-words, with $b_{i,j}\in B^\pm$ for all $i,j$. First, we make two simplifications:

\begin{enumerate}[(a)]
\item If it is \emph{not} the case that $l_0=...=l_{n-1}$, let $l=min\{l_i:i=0,...,n-1\}$. Then $\{x_i:l_i=l\}$ is a proper non-empty subset of $y$, and we define
$$c_n(y)=c_{n-1}(\{x_i:l_i=l\}).$$
From now on it is assumed that $l_0=...=l_{n-1}=l$, say.

\item Note that
$$(x_0x_1^{-1})(x_1x_2^{-1})\cdots(x_{n-1}x_0^{-1})={\bf 1},$$
i.e.
\begin{equation}
\label{equation: cyclic cancelling}
(b_{0,1}\cdots b_{0,l})(b_{1,1}\cdots b_{1,l})\cdots(b_{n-1,1}\cdots b_{n-1,l})={\bf 1}.
\end{equation}

For $i=0,...,n-1$, let $k_i$ be the number of $B$-cancellations in 
\begin{equation}
\label{equation: number of cancellations}
(b_{i,1}\cdots b_{i,l})(b_{i+1,1}\cdots b_{i+1,l}).
\end{equation}
If it is \emph{not} the case that $k_0=...=k_{n-1}$, let $k=min\{k_i:i=0,...,n-1\}$. Then $\{x_i:k_i=k\}$ is a proper non-empty subset of $y$, and we define
$$c_n(y)=c_{n-1}(\{x_i:k_i=k\}).$$
From now on it is assumed that $k_0=...=k_{n-1}=k$, say.
\end{enumerate}

As letters always cancel in pairs, (\ref{equation: cyclic cancelling}) implies that $nl$ is even.\footnote{I would like to thank John Truss and Benedikt L\"owe for finding an error in this proof and suggesting a solution.} Since we are assuming that $n$ is odd, it follows that $l$ is even. Define $m=l/2$, and note that $k\geq m$: if not, then complete cancellation in (\ref{equation: cyclic cancelling}) would not be possible. This allows us to define functions $f_y$ and $g_y$, as in the proof of lemma \ref{lemma: NS implies AC2}:
\begin{eqnarray*}
f_y:&y\rightarrow K:&x_i\mapsto b_{i,1}\cdots b_{i,m}\\
g_y:&y\rightarrow F:&x\mapsto f_y(x)^{-1}x.
\end{eqnarray*}
Since there are $k\geq m$ cancellations in (\ref{equation: number of cancellations}), we have $b_{i+1,1}=b_{i,l}^{-1},...,b_{i+1,m}=b_{i,l-m+1}^{-1}=b_{i,m+1}^{-1}$
for all $i$. By the same calculation as in (\ref{equation: f is well-behaved}), it follows that 
$$f_y(x_i)f_y(x_{i+1})^{-1}=x_ix_{i+1}^{-1}$$
for all $i$, and hence that $g_y(x_i)=g_y(x_{i+1})$ for all $i$. So $g_y:y\rightarrow F$ is a constant function, taking a single value $\alpha_y$, say. The same calculation as (\ref{equation: sigma of alpha is 1}) yields 
$$\sigma_y(\alpha_y)=1.$$
So we set $c_n(y)$ to be the first $y$-letter occurring in the $X$-reduction of $\alpha_y$.

\end{enumerate}

\end{proof}

Whether or not the Nielsen-Schreier theorem is equivalent to the Axiom of Choice still remains an open question. A positive answer might be obtainable by adapting the proof of theorem \ref{theorem: NS implies ACfin}. Finiteness of the sets was used to define the choice function recursively, splitting up in cases (i) -- (iv). Cases (i) -- (iii) were easily dealt with. Case (iv) gave us a cyclic ordering on the finite set -- enough structure to use the basis of the subgroup $K$ to choose a single element.

\bibliography{bibliography}{}

\begin{thebibliography}{1}

\bibitem{Howard1985Subgroups}
P.~Howard.
\newblock {Subgroups of a Free Group and the Axiom of Choice}.
\newblock {\em Journal of Symbolic Logic}, 50:458--467, 1985.

\bibitem{Jech2008Choice}
T.~Jech.
\newblock {\em {The Axiom of Choice}}.
\newblock Dover Publications, Inc., 2008.

\bibitem{Lauchli1962Auswahlaxiom}
H.~L{\"a}uchli.
\newblock {Auswahlaxiom in der Algebra}.
\newblock {\em Commentarii Mathematici Helvetici}, 37:1--18, 1962.

\bibitem{Nielsen1921NS}
J.~Nielsen.
\newblock {Om Regning med ikke-kommutative Faktorer og dens Anvendelse i
  Gruppeteorien}.
\newblock {\em Matematisk Tidsskrift}, B:77--94, 1921.

\bibitem{Schreier1927Untergruppen}
O.~Schreier.
\newblock {Die Untergruppen der freien Gruppen}.
\newblock {\em Abhandlungen aus dem Mathematischen Seminar der Universit{\"a}t
  Hamburg}, 5:161--183, 1927.

\end{thebibliography}
\bibliographystyle{plain}

\end{document}